\newtheorem{theorem}{Theorem}[section] 
\newtheorem{lemma}[theorem]{Lemma}
\newtheorem{conjecture*}[theorem]{Conjecture}
\newtheorem*{remark*}{Remark}
\newtheorem*{example*}{Example}
\newtheorem{proposition}[theorem]{Proposition}
\newtheorem{corollary}[theorem]{Corollary}
\newtheoremstyle{notauto}{}{}{\itshape}{}{\bfseries}{.}{0.5em}{\thmnote{#3}}
\theoremstyle{notauto}
\theoremstyle{definition}
\newtheorem{definition}[theorem]{Definition}
\begin{document}
    
    \baselineskip14pt
    
    \title[The topology on a finite set ] 
    {On The Number of Topologies On A Finite Set}
    \author{M.Yas\.{I}r KIZMAZ}
    
    \address{Department of Mathematics, Middle East Technical University, Ankara 06531, Turkey}
    \email{yasir@metu.edu.tr}
    \subjclass[2010]{Primary 11B50, Secondary 11B05}
    
    \keywords{topology, finite sets, $T_0$ topology}

    \begin{abstract} We denote the number of distinct topologies which can be defined on a set $X$ with $n$ elements by
        $T(n)$. Similarly, $T_0(n)$ denotes the number of distinct $T_0$ topologies on the set $X$. In the present paper, we prove that for any prime $p$, $T(p^k)\equiv k+1 \ (mod \ p)$, and that for each natural number $n$ there exists a unique $k$ such that $T(p+n)\equiv k \ (mod \ p)$. We calculate $k$ for $n=0,1,2,3,4$. We give an alternative proof for a result of Z. I. Borevich to the effect that $T_0(p+n)\equiv T_0(n+1) \ (mod \ p)$.
    \end{abstract}
    \maketitle

    \section{\Large introduction}\label{Intro}
    
    Given a finite set $X$ with $n$ elements, let $\mathfrak{T} (X)$ and $\mathfrak{T}_0(X)$  be the family of all topologies on $X$ and the family of all $T_0$ topologies on $X$, respectively.  We denote the cardinality of $\mathfrak{T} (X)$ by $T(n)$ and the cardinality of $\mathfrak{T}_0(X)$ by $T_0(n)$. There is no simple formula giving $T(n)$ and $T_0(n)$. 
    
    Calculation of these sequences by hand becomes very hard for $n\geq 4$. The online encyclopedia of N. J. A. Sloane \cite{N.J.A} gives the values of $T(n)$ and $T_0(n)$ for $n\leq 18$. For a more detailed discussion of results in the literature, we refer to the article by Borevich \cite{B}. 
    
    In the present paper, we prove that for any prime $p$, $T(p^k)\equiv k+1 \ (mod \ p)$, and that for each natural number $n$ there exists a unique $k$ such that $T(p+n)\equiv k \ (mod \ p)$. We calculate $k$ for $n=0,1,2,3,4$. We give an alternative proof for a result of Z. I. Borevich \cite{B2} to the effect that $T_0(p+n)\equiv T_0(n+1) \ (mod \ p)$. Our results depend on basic properties of group action, which is the first time used in this problem as far as author is aware.
    
    \section{\Large Main Results}
    Let $G$ be a group acting on the finite set $X$. Then the action of $G$ on $X$ can be extended to the action of $G$ on $\mathfrak{T}(X)$ by setting $g\tau=\{gU\mid U\in \tau\}$ where $\tau\in \mathfrak{T}(X) $ and $gU=\{gu\mid u\in U\}$. Now set $Fix(\mathfrak{T}(X))=\{\tau\in \mathfrak{T}(X)\mid g\tau=\tau \ \text{for all} \ g\in G\}$. Notice that if $G$ is a $p$-group, $|\mathfrak{T}(X)|\equiv |Fix(\mathfrak{T}(X))| \ (mod \ p)$ as every non-fixed element of $\mathfrak{T}(X)$ has orbit of length a positive power of $p$. For a given topology $\tau\in \mathfrak{T}(X)$ and $x\in X$, we denote the intersection of all open sets of $\tau$ including $x$ by $O_x$. Note that $O_x\in \tau$ as we are in the finite case. 
    
    \begin{definition}
        A base $\mathfrak B$ of a topology $\tau$ is called a minimal base if any base of the topology contains $\mathfrak B$.
    \end{definition}
    \begin{example*}
    	Let $X=\{a,b,c\}$ be a set and let $\tau$ be a topology on $X$ with $\tau=\{\emptyset,\{a\},\{a,b\},\{a,c\},\{a,b,c\} \}$. Now if $\mathfrak B$ is an arbitrary base for $\tau$ then we have $U=\bigcup_{x\in U} B_x $ where $U\in \tau$ and $B_x\in \mathfrak B$ such that $x\in B_x\subseteq U$. Thus, we observe that $\mathfrak B'=\{\{a\},\{a,b\},\{a,c\}\}\subseteq \mathfrak B$. Moreover, $\mathfrak B'$ is also a base for $\tau$ which implies that $\mathfrak B'$ is a minimal base for $\tau$.
    \end{example*}
    \begin{proposition}
        Let $\tau\in \mathfrak T(X)$ and $M_\tau$ be the set of all distinct $O_x$ for $x\in X$.
        A base $\mathfrak B$ of $\tau$ is a minimal base if and only if $\mathfrak B=M_\tau$.
    \end{proposition}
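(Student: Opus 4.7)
The plan is to prove both directions by first establishing the key structural fact that every base of $\tau$ must contain each $O_x$, and that $M_\tau$ is itself a base.

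First I would verify that $M_\tau$ is a base of $\tau$. Since $O_x \in \tau$ (by finiteness, as stated in the paragraph before the definition), and for any open set $U \in \tau$ and any $x \in U$ we have $O_x \subseteq U$ by definition of $O_x$, we can write $U = \bigcup_{x \in U} O_x$. Hence $M_\tau$ generates $\tau$ and is a base.

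Next I would show that $M_\tau$ is contained in every base $\mathfrak B$ of $\tau$. Fix $x \in X$; since $O_x \in \tau$ and $\mathfrak B$ is a base, we can write $O_x = \bigcup_{i} B_i$ with $B_i \in \mathfrak B$. Because $x \in O_x$, at least one $B_{i_0}$ contains $x$; but $B_{i_0} \in \tau$ and $x \in B_{i_0}$ force $O_x \subseteq B_{i_0}$ by the defining property of $O_x$, while $B_{i_0} \subseteq O_x$ from the union expression. Therefore $B_{i_0} = O_x$, so $O_x \in \mathfrak B$. As $x$ was arbitrary, $M_\tau \subseteq \mathfrak B$.

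Combining these two facts closes the proof in both directions. If $\mathfrak B$ is a minimal base, then by the definition of minimality $\mathfrak B$ is contained in every base, in particular $\mathfrak B \subseteq M_\tau$; but the second step gives $M_\tau \subseteq \mathfrak B$, hence $\mathfrak B = M_\tau$. Conversely, if $\mathfrak B = M_\tau$, then the second step immediately shows that every base of $\tau$ contains $\mathfrak B$, so $\mathfrak B$ is minimal. The only even mildly subtle step is the containment $M_\tau \subseteq \mathfrak B$, which hinges on the minimality of $O_x$ among open neighborhoods of $x$; everything else is bookkeeping with finite unions and intersections.
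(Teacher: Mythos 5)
Your proof is correct. The paper in fact states this proposition without giving any proof, so there is nothing to compare against; your two-step argument --- first that $M_\tau$ is itself a base since every open $U$ equals $\bigcup_{x\in U}O_x$, and second that any base $\frak B$ must contain each $O_x$ because a basic set $B$ with $x\in B\subseteq O_x$ is forced to equal $O_x$ --- is the standard argument that fills this gap, and both directions of the equivalence follow correctly from it.
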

    By the proposition, we extend the definition: a base $\mathfrak B$ on $X$ is minimal if $\mathfrak B=M_\tau$ where $\tau$ is the topology generated by $\mathfrak B$.

    \begin{lemma}\label{lemma}
        $\tau\in Fix(\mathfrak {T}(X))$ if and only if $M_\tau$ is $G$-invariant.
    \end{lemma}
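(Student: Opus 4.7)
The plan is to reduce the statement to a single equivariance identity, namely $gO_x = O_{gx}$ for all $g\in G$ and $x\in X$, and then to exploit the uniqueness of the minimal base (the preceding proposition) to transfer $G$-invariance between $\tau$ and $M_\tau$.

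First I would establish the key identity $gO_x = O_{gx}$. Since $g$ acts on $\mathfrak{T}(X)$ by permuting open sets, $g$ sends the collection of open sets of $\tau$ containing $x$ bijectively to the collection of open sets of $g\tau$ containing $gx$. Taking intersections on both sides (we are in the finite case, so $O_x$ is itself open), we get that $gO_x$ is the smallest element of $g\tau$ containing $gx$. If we already know $g\tau=\tau$, this says $gO_x = O_{gx}$; but in fact the identity $g(\bigcap_{U\ni x, U\in\tau}U) = \bigcap_{U\ni x, U\in\tau}gU$ holds for any topology, and the right-hand side is the smallest open set of $g\tau$ that contains $gx$, which is by definition the $O_{gx}$ computed in $g\tau$. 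This purely set-theoretic remark is what will drive both directions.

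For the forward direction, assume $g\tau=\tau$ for all $g\in G$. Given $O_x\in M_\tau$, the identity just discussed gives $gO_x=O_{gx}$ where the latter is computed in $g\tau=\tau$, hence lies in $M_\tau$. Thus $gM_\tau\subseteq M_\tau$, and applying the same argument to $g^{-1}$ yields $gM_\tau=M_\tau$, so $M_\tau$ is $G$-invariant. For the converse, assume $gM_\tau=M_\tau$. Since $M_\tau$ is a base for $\tau$, the family $gM_\tau$ is a base for $g\tau$; but $gM_\tau=M_\tau$, so $g\tau$ and $\tau$ share the same base and therefore coincide. This is where the preceding proposition is used: the minimal base is intrinsic to $\tau$ and determines it.

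The only mildly subtle point, which I would treat as the main (if modest) obstacle, is making sure the identity $gO_x=O_{gx}$ is stated and justified carefully before it is invoked, because the symbol $O_{gx}$ is a priori ambiguous (computed with respect to which topology?). Once one observes that intersections commute with the bijection $U\mapsto gU$ on the power set of $X$, both directions become a straightforward unwinding of definitions. No further ingredients beyond the proposition characterizing the minimal base are needed.
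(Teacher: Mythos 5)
Your proposal is correct and follows essentially the same route as the paper: both directions hinge on the identity $gO_x=O_{gx}$ (which the paper proves by the two inclusions $O_{gx}\subseteq gO_x$ and $O_x\subseteq g^{-1}O_{gx}$, and you prove by noting that the bijection $U\mapsto gU$ commutes with intersections), and the converse in both cases is just the observation that the $G$-invariant base $M_\tau$ generates $\tau$, so $g\tau=\tau$. Your explicit care about in which topology $O_{gx}$ is computed is a nice clarification but does not change the argument.
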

    
    \begin{proof}
        Let $\tau\in  Fix(\mathfrak {T}(X)) $ and $O_x\in M_\tau$. We need to show that $gO_x\in M_\tau$. Let $gx=y$ for $g\in G$. As $g\tau=\tau$, $gO_x\in \tau$. Thus, $gO_x$ is an open set containing the element $gx=y$. Hence, $O_{y}\subseteq gO_x$. Since $g^{-1}y=x$, we can show that $O_x\subseteq g^{-1}O_y$ which forces $gO_x=O_y$. So, $  gO_x\in  M_\tau \ \forall g\in G$. Now assume that $M_\tau$ is $G$-invariant. For each  $U\in\tau$, we have $U=\bigcup_{x\in U} O_x $. Then $gU\in \tau$  as $gO_x\in M_{\tau}$ for all $g\in G$, and hence $\tau\in Fix(\mathfrak {T}(X))$.
    \end{proof}
    
    \begin{theorem}\label{$p^k$ theorem}
        Let $p$ be a prime and let $k$ be a natural number. Then $T(p^k)\equiv k+1 \ (mod \ p).$
    \end{theorem}
    \begin{proof}
        Without loss of generality, let $X$ be the cyclic group of order $p^k$, that is, $X={C}_{p^k}$. Clearly $X$ acts on $X$ by left multiplication. By extending this action, $X$ acts on $\mathfrak{T}(X)$. Notice that $|\mathfrak{T}(X)|\equiv |Fix(\mathfrak{T}(X))| \ (mod \ p)$ as $X$ is a $p$-group.
        It is left to show that $Fix(\mathfrak{T}(X))$ has $k+1$ elements. 
        
        Let $\tau\in Fix(\mathfrak{T}(X))$ and let $O_x,O_y\in M_\tau$ for $x,y\in X$. Then  $(yx^{-1})O_x$ is an open set including $y$. Hence, $O_y\subseteq (yx^{-1})O_x$ which means $|O_y|\leq |O_x|$. The other inclusion can be done similarly so $|O_x|=|O_y|$ for all $x,y\in X$. Now if $m\in O_x\cap O_y$, then $O_m\subseteq O_x\cap O_y$. As their orders are equal, we must have $O_x=O_y$ or $O_x\cap O_y=\emptyset$. Thus, $X$ is a disjoint union of elements of $M_\tau$, that is, $X=\bigcupdot_{O_x\in M_{\tau}} O_x $. It follows that $|X|=|M_{\tau}||O_x|$ for any $x\in X$.
        
         Note that $X$ also acts on $M_\tau$ by Lemma 2.3. Let $e$ be the identity element of $X$ and $Stab(O_e)$ be the stabilizer of $O_e$ in the action of $X$ on $M_{\tau}$. Then we obtain that $|X:Stab(O_e)|=|M_{\tau}|$ as the action is transitive. It follows that $|Stab(O_e)|=|O_e|$ as $|X|=|M_{\tau}||O_e|$ also holds. Since our action is induced from left multiplication in the group $X$, the product $Stab(O_e)O_e$ is exactly set multiplication in the group $X$. Then we obtain that $O_e=Stab(O_e)O_e\supseteq Stab(O_e)e=Stab(O_e)$. But the equality $|O_e|=|Stab(O_e)|$ forces that $O_e=Stab(O_e)$.
         
          Hence, the set $M_\tau$ is the set of left cosets of a subgroup of $X$. Since the chosen topology $\tau$ from $Fix(\mathfrak{T}(X))$ uniquely determines $M_\tau$ and $M_\tau$ uniquely determines a subgroup $O_e$ of $X$, we have an injection from $Fix(\mathfrak{T}(X))$ to the set of all subgroups of $X$. Conversely, for a subgroup $H$ of $X$, the set of left cosets of $H$ forms an $X$-invariant minimal base for a topology. The topology $\tau$ generated by this base is an element of $Fix(\mathfrak{T}(X))$ by Lemma 2.3. Hence, the cardinality of $Fix(\mathfrak{T}(X))$ is equal to the number of the subgroups of $X$, which is $k+1$.
    \end{proof}  
    By applying the same method in the above proof, we can also show that $T_0(p^k)\equiv 1 \ (mod \ p)$. Actually, Z. I. Borevich proved a more general result  about $T_0(n)$. Now we establish an alternative proof for the theorem of Z. I. Borevich.
    
    \begin{theorem}[Z. I. Borevich] \label{Borevich thm}
        Let $p$ be a prime. If $k \equiv l \ (mod \ p - 1)$, then $T_0(k)\equiv T_0(l) \ (mod \ p)$.
        \end {theorem}
        \begin{proof}
            It is equivalent to show that $T_0(n+p)\equiv T_0(n+1) \ (mod \ p)$ for an integer $n\geq 0$.
            Let $C=C_p$ be the cyclic group of order $p$ and $N$ be a set with $n$ elements. Without loss of generality, let $X$ be the disjoint union of $C$ and $N$ so that the cardinality of $X$ is $p+n$. We define the action of $C$ on $X$ in the following way: for $c\in C$ and for $x\in X$, $c*x=cx$ if $x\in C$ and $c*x=x$ if $x\in N$. Then the action of $C$ on $X$ can be extended to the action of $C$ on $\mathfrak{T_0}(X)$. As $|\mathfrak{T_0}(X)|\equiv |Fix(\mathfrak{T_0}(X))| \ (mod \ p)$, it is left to show that $|Fix(\mathfrak{T_0}(X))|=T_0(n+1)$.
            
             Let $\tau\in Fix(\mathfrak{T_0}(X))$. Pick $x,y\in C$ and $a\in N$ where $x\neq y$. We know that $yx^{-1}O_x=O_y$. Then we can observe that $O_x\cap N=O_y\cap N$ as $N$ is fixed by $C$. Similarly, $O_x\cap C$ and $O_y\cap C$ are disjoint or equal. But we can not have $O_x=O_y$ as it is a $T_0$ topology. Hence, $(O_x\cap C) \cap (O_y\cap C)=\emptyset$. Then we obtain that $O_x\cap C=\{x\}$, and hence $O_x=\{x\}\cup S$ for a subset $S$ of $N$. On the other hand, $x(C\cap O_a)=C\cap O_a$, which forces that $C\cap O_a$ is either $C$ or $\emptyset$, and hence $O_a$ is either $C\cup T$ or $T$ for a subset $T$ of $N$.
             Set $Y=\{x\}\cup N$  then for each $\tau\in Fix(\mathfrak{T_0}(X))$, we have subspace topology $\tau_Y=\{U\cap Y \mid U\in \tau \}$ on $Y$. Note that $\tau_Y$ is a $T_0$ topology as it is the subspace topology on $Y$ induced from $\tau$.
              
              Now our aim is to show that the mapping  $\tau\mapsto \tau_Y$ is a bijection from $Fix(\mathfrak{T_0}(X))$ to ${T_0}(Y)$. Let $\pi,\tau\in Fix(\mathfrak{T_0}(X)) $ such that $\tau_Y=\pi_Y$. Then we have $O_t\cap Y=O'_t\cap Y$ for all $t\in Y$ where $O_t\in M_\tau$ and $O'_t\in M_\pi$. Note that $O_x\subseteq Y$ by the properties deduced in previous paragraph, and hence we obtain $O_x=O'_x$. We also have the equality $O_a\cap Y=O'_a\cap Y$ for $a\in N$, which forces that $O_a\cap C=O'_a\cap C$ is either $C$ or $\emptyset$ by previous paragraph. Since $O_a\cap N=O'_a\cap N$,  we get $O_a=O'_a$ for $a\in N$. Then it follows that $O_t=O'_t$ for all $t\in Y$. Since $O_z=zx^{-1}O_x$ for $z\in C$, we obtain that $O_t=O'_t$ for all $t\in X$, which implies the equality $\tau=\pi$. Thus, our map is one to one.

               Now, let $\pi\in \mathfrak{T}_0(Y)$ and let $M_{ \pi}$ be the minimal base of $\pi$. Set $$O_t=\begin{cases}
                  tx^{-1}O'_{x} & \ if \ t\in C \\ 
                O'_{t} & \ if \ t\in N \ and \ x\notin O'_{t}\\
                C\cup O'_{t} & \ if \ t\in N \ and \ x\in O'_{t}
                \end{cases} $$ for each  $O'_{t}\in M_{\pi}$. We need to show that  $\mathfrak{B}=\{O_t\mid t\in X\}$ is a minimal base and the topology $\tau$ generated by $\mathfrak B$ is a $T_0$ topology. To show that it is a minimal base, we need to show that if $a\in O_t$ then $O_a\subseteq O_t$ for any $a,t\in X$. In fact, we will observe that $O_a\subset O_t$ which shows that $\tau$ is a $T_0$ topology. It can be done case by case but here we present only nontrivial cases. Fix $a\in N$ and let $a\in O_x$. Note that $a\in O'_x$ as $O'_x=O_x$, and hence we obtain $O'_a\subseteq O'_x$. If $x\in O'_a$ then we have $O'_a=O'_x$ which is not possible as $\pi$ is a $T_0$ topology. Thus, $x\notin O'_a$ which implies $O_a=O'_a$ by our setting. Then we obtain that $O_a\subset O_x$. Now assume that $x\in O_a$. Then we get that $x\in O'_a$ otherwise $x\notin O_a$ by our setting. Thus, we obtain $O'_x\subset O'_a$ in a similar way. It follows that $O_x\subset O_a$ as $O_x=O'_x$. 
                As a result  $\tau \in Fix(\mathfrak{T_0}(X))$ as $\mathfrak{B}$ is a $C$-invariant minimal base. Moreover, the equality $ \tau_Y =\pi$ holds, which concludes that the mapping $\tau\mapsto \tau_Y$  from $Fix(\mathfrak{T_0}(X))$ to $\mathfrak{T_0}(Y)$ is a bijection, which completes the proof. 
        \end{proof}

        \begin{corollary}
            $T_0(p^k)\equiv 1 \ (mod \ p)$ where $k$ is a natural number and $p$ is a prime number.
        \end{corollary}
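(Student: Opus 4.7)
The plan is to rerun the proof of Theorem 2.4 with $\frak T_0(X)$ in place of $\frak T(X)$. I would take $X := C_{p^k}$ acting on itself by left multiplication, extended to $\frak T_0(X)$. Because $X$ is a $p$-group, $|\frak T_0(X)| \equiv |Fix(\frak T_0(X))| \pmod p$, so it suffices to show that $Fix(\frak T_0(X))$ has exactly one element.

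The crucial reduction is that $Fix(\frak T_0(X)) \subseteq Fix(\frak T(X))$, since any $T_0$ topology is in particular a topology. For $\tau \in Fix(\frak T(X))$, the proof of Theorem 2.4 already established that $M_\tau$ is precisely the set of left cosets of the subgroup $H := O_e \leq X$; equivalently $O_x = xH$ for every $x$. I would then invoke the standard fact that a finite topology is $T_0$ iff the assignment $x \mapsto O_x$ is injective. This forces $xH \ne yH$ whenever $x \ne y$, hence $H = \{e\}$, and so $\tau$ is the discrete topology, which is manifestly $X$-invariant and $T_0$. Therefore $|Fix(\frak T_0(X))| = 1$, giving $T_0(p^k) \equiv 1 \pmod p$.

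I do not anticipate any genuine obstacle here: the corollary is really a codicil to Theorem 2.4, with the $T_0$ hypothesis simply collapsing the subgroup lattice of $C_{p^k}$ from $k+1$ elements down to the trivial subgroup alone. As an alternative route worth noting, one could instead iterate Theorem 2.5 in steps of $p-1$; since $p^k \equiv 1 \pmod{p-1}$, iterating peels $T_0(p^k)$ down to $T_0(1)=1$ modulo $p$, yielding an independent derivation from the Borevich congruence.
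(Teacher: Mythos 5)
Your proposal is correct and matches the paper's intent: the paper itself remarks, just before Theorem 2.5, that the same method as in Theorem 2.4 yields $T_0(p^k)\equiv 1 \pmod p$, and your reduction via $Fix(\frak{T}_0(X))\subseteq Fix(\frak{T}(X))$ together with the fact that $T_0$ forces the subgroup $O_e$ to be trivial is exactly that adaptation. Your alternative route, iterating Theorem 2.5 in steps of $p-1$ using $p^k\equiv 1 \pmod{p-1}$, is precisely why the paper states the result as a corollary of Borevich's theorem, so both of your arguments align with the paper.
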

        
        The proof of next theorem is similar to the proof of Theorem \ref{Borevich thm} in the sense that both use the same technique. For clarity, we repeat some arguments.
        \begin{theorem}
            For each natural number $n$, there exists a unique integer $k$ such
            that $T(p+n)\equiv k \ (mod \ p)$ for all primes $p$.
        \end{theorem}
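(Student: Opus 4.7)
The plan is to follow the fixed-point method used in the proof of Theorem~2.5. I would let $C=C_p$, let $N$ be a set with $n$ elements disjoint from $C$, take $X=C\sqcup N$, and let $C$ act on $X$ by left multiplication on $C$ and trivially on $N$. Extending to an action on $\frak{T}(X)$ and using that $C$ is a $p$-group gives $|\frak{T}(X)|\equiv|Fix(\frak{T}(X))|\pmod{p}$, so it suffices to prove that $|Fix(\frak{T}(X))|$ depends only on $n$, not on $p$. Existence of $k$ then follows by taking $k:=|Fix(\frak{T}(X))|$, and uniqueness is automatic: if two distinct integers $k$ and $k'$ both satisfied $T(p+n)\equiv k\equiv k'\pmod{p}$ for every prime $p$, then $p$ would divide $k-k'$ for all $p$, forcing $k=k'$.

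For $\tau\in Fix(\frak{T}(X))$, I would set $A:=O_e\cap C$ and first show that $A$ is a subgroup of $C_p$, hence equals $\{e\}$ or $C$. The crucial step is the same group-theoretic one as in the proof of Theorem~2.4: for $m\in A$, Lemma~2.3 gives $O_m=m\cdot O_e$, so $O_m\cap C=mA$; since $m\in O_e$ forces $O_m\subseteq O_e$, one gets $mA\subseteq A$, and hence $mA=A$ by counting. A similar $C$-invariance argument shows $O_a\cap C\in\{\emptyset,C\}$ for each $a\in N$.

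I would then split $Fix(\frak{T}(X))$ according to $A$. When $A=C$, every $O_x$ with $x\in C$ equals $C\cup(O_e\cap N)$, and the quotient map $X\to\bar X$ collapsing $C$ to a single point $\bar C$ induces a bijection with $\frak{T}(\bar X)$, contributing exactly $T(n+1)$ topologies (just as in Theorem~2.5). When $A=\{e\}$, the sets $O_x=\{x\}\cup(O_e\cap N)$ for $x\in C$ are $p$ distinct minimal base elements that all coalesce in $\bar X$; I would verify that the induced quotient topology $\bar\tau$ satisfies the extra constraint $\bar C\notin\bar O_u$ for every $u\in\bar O_{\bar C}\setminus\{\bar C\}$, and conversely that every such $\bar\tau$ lifts uniquely to a Case~2 element of $Fix(\frak{T}(X))$. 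The resulting count is described purely in terms of topologies on a set of $n+1$ elements subject to the combinatorial constraint above, so it too depends on $n$ alone.

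The main obstacle will be the Case~2 bookkeeping: because the quotient map is no longer injective on the minimal base, one must verify carefully that the stated extra condition on $\bar\tau$ is exactly what makes the naive lift satisfy the minimal base axioms on $X$. Once that bijection is in place, summing the Case~1 and Case~2 counts exhibits $|Fix(\frak{T}(X))|$ as a fixed integer depending only on $n$, completing the proof.
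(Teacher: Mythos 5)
Your proposal is correct and follows essentially the same route as the paper: the same $C_p$-action on $X=C\sqcup N$, the same fixed-point congruence $T(p+n)\equiv |Fix(\frak{T}(X))| \bmod p$, and the same dichotomies $O_x\cap C\in\{\{x\},C\}$ for $x\in C$ and $O_a\cap C\in\{\emptyset,C\}$ for $a\in N$, from which independence of $p$ and the divisibility argument for uniqueness follow. Your explicit Case-2 bijection with constrained topologies on $\bar X$ simply makes rigorous the counting step that the paper states tersely as ``the number of elements of $C$ has no contribution,'' and it is consistent with the paper's later Case 1/Case 2 analysis in Theorem 2.8.
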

        \begin{proof}
            If $n=0$, $T(p)\equiv 2 \ (mod \ p)$  by Theorem $2.4$. Now we can assume that $n>0$.
            Let $C=C_p$ be the cyclic group of order $p$ and $N$ be a set with $n$ elements. We define the action of $C$ on $X$ as in the proof the previous theorem.  Then the action of $C$ on $X$ can be extended to the action of $C$ on $\mathfrak{T}(X)$. As $|\mathfrak{T}(X)|\equiv |Fix(\mathfrak{T}(X))| \ (mod \ p)$, it is left to show that $|Fix(\mathfrak{T}(X))|$ does not depend on the choice of prime $p$.
            
             Due to Lemma 2.3, $|Fix(\mathfrak{T}(X))|$ is equal to the number of $C$-invariant minimal bases on $X$. Let $x,y\in C$. Notice that $O_x$ completely determines $O_y$ as $(yx^{-1})O_x=O_y$. Then $|O_x\cap C|=|O_y\cap C|$ as $yx^{-1}(O_x\cap C)=O_y\cap C$. Now it is easy to see that $O_x\cap C$ and $O_y\cap C $ are either disjoint or equal. As cardinality of $C$ is prime, $O_x\cap C$ is either $\{x\}$ or $C$. If  $a\in N$ then we must have $gO_a=O_a$ for all $g\in C$ as $ga=a$. Hence, $O_a\cap C$ is either $\emptyset$ or $C$. Thus, the elements of $C$ have no contribution to the number of possible minimal bases. Now we know that such $k$ exists. If $k'$ is also such an integer then $k\equiv k' \ (mod \ p)$ for all primes $p$. Hence $k-k'$ is divisible by all primes which forces $k-k'=0$. 
        \end{proof}    
        By the previous theorem, we see that $k$ is uniquely determined by $n$, which leads the following definition.
        \begin{definition}
            We define the integer sequence $k(n)$ for $n\geq 0$ to be the unique number such that $k(n)\equiv T(p+n) \ (mod \ p)$ for all primes $p$. 
        \end{definition}
        The sequence $k(n)$ has appeared in the online encyclopedia of N. J. A. Sloane [2] with reference number (A265042) after this article appeared in arxiv.
        The proof of the theorem also gives an algorithm to calculate $k(n)$ for a given $n$.
        
        \begin{corollary}
            $T(p+1)\equiv 7 \ (mod \ p)$ for all primes $p$, that is, $k(1)=7$.
            \end {corollary}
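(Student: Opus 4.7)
The plan is to apply Theorem 2.7 with $n=1$. This reduces the corollary to showing that the number of $C$-invariant minimal bases on $X = C \cup \{a\}$ equals $7$, where $C = C_p$ acts on itself by left multiplication and fixes the extra point $a$. As emphasised in the proof of Theorem 2.7, this number is independent of $p$, so a single $p$-free enumeration suffices.

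First I would record the shape constraints that $C$-invariance places on a minimal base $M_\tau$, extracted directly from the proof of Theorem 2.7: for $x \in C$ the set $O_x \cap C$ is either $\{x\}$ (and then, uniformly, $\{y\}$ for every $y \in C$) or all of $C$; since $a$ is fixed by $C$, the element $a$ belongs either to every $O_x$ with $x \in C$ or to none of them; and independently $O_a$ is either $\{a\}$ or $X$. These three independent binary choices produce $8$ candidate $C$-invariant families $\{O_x\}_{x \in X}$.

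Next I would verify, case by case, which of the candidates satisfies the minimal-base axiom that $y \in O_x$ implies $O_y \subseteq O_x$. Seven of them pass this check immediately. The single failure is the candidate with $O_x = \{x,a\}$ for every $x \in C$ and $O_a = X$, since then $a \in O_x$ forces $X = O_a \subseteq \{x,a\}$, which is impossible once $p \geq 2$. Consequently $|Fix(\frak{T}(X))| = 7$, and Theorem 2.7 delivers $T(p+1) \equiv 7 \pmod p$.

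The main thing requiring care is the verification that $C$-invariance really does impose the ``all or none'' dichotomy in each of the three coordinates, so that the eight candidates are genuinely all of the $C$-invariant families and no further compatibility has to be checked; once this is clean, the rest is a short finite check and no deeper obstacle appears.
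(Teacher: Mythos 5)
Your proposal is correct and follows essentially the same route as the paper: reduce to counting the $C$-invariant minimal bases on $C\cup\{a\}$ via the theorem on $k(n)$, enumerate them using the constraints $O_x\cap C\in\{\{x\},C\}$ and $O_a\in\{\{a\},X\}$, and find exactly seven. Your organization as eight candidates minus the one violating $y\in O_x\Rightarrow O_y\subseteq O_x$ (namely $O_x=\{x,a\}$, $O_a=X$) is just a repackaging of the paper's $4+3$ case count, with the added merit of stating explicitly the consistency condition the paper checks only implicitly.
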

            
            \begin{proof}
                By following the proof the Theorem 2.7,  it can be easily counted that there are exactly seven $C$-invariant minimal bases. Then the result follows.
            \end{proof}
            However, it is difficult to count $C$-invariant minimal bases for larger $n$ to determine $k(n)$. We develop a new method to calculate $k(n)$ for larger $n$. But the method requires knowing values of $T(s)$ for some $s$.
            
            \begin {theorem} \label{inequality}
            The sequence $k(n)$ satisfies the following inequality $T(n+1)+T_0(n+1)\leq k(n) < 2 T(n+1)$ for $n\geq 1$.    
            \end {theorem}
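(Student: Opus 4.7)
The plan is to refine the argument of Theorem~2.6, which established that $k(n)$ equals the number of $C$-invariant minimal bases on $X = C \cup N$. From that analysis, for each $x \in C$ the intersection $O_x \cap C$ is either $\{x\}$ or $C$, and because $c \cdot O_x = O_{cx}$ the same alternative occurs simultaneously for every $x \in C$. This partitions the $C$-invariant minimal bases into two families: Case~A, where $O_x \cap C = C$ for all $x \in C$, and Case~B, where $O_x \cap C = \{x\}$ for all $x \in C$. I will show that Case~A contributes exactly $T(n+1)$ bases while Case~B contributes a number strictly between $0$ and $T(n+1)$, whence $T(n+1) < k(n) < 2T(n+1)$ (assuming $n \geq 1$).

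In Case~A all of the $O_x$ with $x \in C$ collapse to one set $C \cup S$ with $S \subseteq N$, and each $O_a$ with $a \in N$ has the form $T_a$ or $C \cup T_a$ with $T_a \subseteq N$. Collapsing $C$ to a single point $\bar c$ carries such a base to a minimal base on the quotient $\bar X = X/\sim$ used in the proof of Theorem~2.5, and the construction reverses canonically by inflating $\bar c$ back to $C$; this yields a bijection with $\frak T(\bar X)$ and hence $T(n+1)$ bases. For Case~B, $C$-invariance forces $O_x = \{x\} \cup S$ with a \emph{common} $S \subseteq N$, and I associate to the base the topology on $\bar X$ whose minimal base is $\{\{\bar c\} \cup S\} \cup \{O'_a : a \in N\}$, where $O'_a = T_a$ or $\{\bar c\} \cup T_a$ according to whether $O_a \cap C$ equals $\emptyset$ or $C$. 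The transitivity property ``$m \in O_x \cap O_y \Rightarrow O_m \subseteq O_x \cap O_y$'' transfers cleanly across the quotient, and the data $S$, $U = \{a \in N : O_a \cap C = C\}$, and the topology on $N$ can all be recovered from the image, so this assignment is injective.

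Its image consists precisely of those topologies on $\bar X$ in which $O'_{\bar c} \neq O'_a$ for every $a \in N$: indeed, $O'_{\bar c} = O'_a$ forces $a \in S \cap U$, and back on $X$ this would require $\{x\} \cup S = O_x = O_a = C \cup T_a$, impossible because $(\{x\} \cup S) \cap C = \{x\}$ has one element while $(C \cup T_a) \cap C = C$ has $|C| = p \geq 2$. For $n \geq 1$, the discrete topology on $\bar X$ lies in this image (it pulls back to the discrete topology on $X$, which is visibly $C$-invariant), so Case~B is nonempty and $k(n) > T(n+1)$; conversely the indiscrete topology on $\bar X$, where $O'_z = \bar X$ for every $z$, is \emph{not} in the image, so $|\textrm{Case~B}| < T(n+1)$ and $k(n) < 2T(n+1)$. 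The main technical step is the back-and-forth dictionary between Case~B bases on $X$ and topologies on $\bar X$ whose $\bar c$ has a distinct minimal open set; this reduces to a routine transitivity check across the small number of configurations for $O_x$ and $O_a$, analogous to the verification already carried out in the proof of Theorem~2.5.
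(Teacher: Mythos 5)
Your proposal is correct and takes essentially the same route as the paper: the same split into the cases $O_x\cap C=C$ and $O_x\cap C=\{x\}$, with the first case in bijection with the $T(n+1)$ topologies on the quotient and the second case injecting into them but missing at least one. You are in fact a bit more careful than the paper, since you explicitly verify (via the discrete topology) that the second case is nonempty, which the strict lower bound requires, and you note the implicit hypothesis $n\geq 1$.
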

            
            \begin{proof}
                We follow the proof of Theorem 2.7. We have $k(n)=|Fix(\mathfrak{T}(X))|$ where $|X|=n+p$ and $ |C|=p$. Thus, we need to count $C$-invariant minimal bases. According to the proof, we have two main cases.
                
                \textit{Case1:}  $O_x\cap C=C$ for $x\in C$.\\ Then $O_y\cap C=C$ for all $y\in C$ and $O_x=O_y$ for all $x,y\in C$. If $a\in N$ then $O_a\cap C$ is either $C$ or $\emptyset$. Hence we can see the whole $C$ as one element, that is, pass to the quotient $\overline{X}=X/\sim$ where $x\sim y$ if $x,y\in C$ then there is a one to one correspondence between $C$-invariant topologies and quotient   topologies. It follows that we have exactly $T(n+1)$ possible sub-cases.
                
                \textit{Case2:} $O_x\cap C=\{x\}$ for $x\in C$.\\ 
                Let $x,y\in C$ then when we define $O_x$, $O_y$ is completely determined as $O_y=yx^{-1}O_x$. Moreover, we have $O_x\cap N=O_y\cap N$ as $C$ acts trivially on $N$. We also have $O_a\cap C$ is either $C$ or $\emptyset$. Now set $Y=N\cup \{x\}$ then the mapping $\tau \to \tau_Y$ is one to one from $Fix(\mathfrak{T}(X))$ to $\mathfrak{T}(Y)$.  Hence we can have at most $T(n+1)$ possible sub-cases. But we can not have exactly $T(n+1)$ possible sub-cases as the given map is not onto. To see this, set $O'_x=\{x,a\}$ for $a\in N$ and  $O'_a=\{x,a\}$ for a topology $\pi \in \mathfrak{T}(Y)$. Let $\tau\in Fix(\mathfrak{T}(X))$ such that $\tau_Y=\pi$. Note that $O'_a= O_a\cap Y$ and $O_a$ must be fixed by the action of $C$. Thus, we have $C\cup \{a\}\subseteq O_a$. Since $O_x=O'_x=\{x,a\}$, we obtain that $O_a\subseteq \{x,a\}$. Then we obtain that $O_a=\{a\}$ as it must be fixed by $C$, which is a contradiction. Thus, the map is not onto. Then we get that $k(n)<2T(n+1)$. Moreover, the mapping is onto from $Fix(\mathfrak{T}_0(X))\subseteq Fix(\mathfrak{T}(X)) $ to $\mathfrak{T}_0(Y)\subseteq \mathfrak{T}(Y)$.    (See the proof of Theorem \ref{Borevich thm} for details of how the map $\tau \to \tau_Y$ is one to one. It also shows why this map is onto when the target set is $\mathfrak{T}_0(Y)$.) Thus, we have at least $T_0(n+1)$ sub-cases, and hence $T(n+1)+T_0(n+1)\leq k(n) < 2 T(n+1).$
            \end{proof}
            \begin{corollary}
            	$\lim\limits_{n\to \infty}^{}\dfrac{k(n)}{T(n+1)}=2$.
            \end{corollary}
            \begin{proof}
            	We have $1+\frac{T_0(n+1)}{T(n+1)}\leq\frac{k(n)}{T(n+1)}<2$ by Theorem \ref{inequality}. In \cite{E}, it is proved that $lim_{n\to \infty}\frac{T_0(n)}{T(n)}=1$, and hence the result follows.
            \end{proof}
            
            \begin{theorem}
                The sequence $k(n)=2,7,51,634,12623$ for $n=0,1,$ $2,3,4$ respectively.\end {theorem}
                \begin{proof}

                If $n=0$, the result follows from Theorem \ref{$p^k$ theorem}. Now assume that $n\geq 1$. We only show the calculation of $k(2)$ and rest of them follow in a similar way.
                By previous theorem, we have $T(3)+T_0(3)\leq k(2)<2T(3)$ so $48\leq k(2)<58$.
                Clearly, we have 
                $$\begin{array}{rcl}
                T(5)& \equiv & k(2) \ (mod \ 3) \\
                T(7)& \equiv & k(2) \ (mod \ 5)
                
                \end{array}$$

                 It follows that $k(2)\equiv 6 \ (mod \ 15)$ by solving the above congruence relation. We obtain that $k(2)=51$ as $48\leq k(2)<58$. For $n=3,4$, we have the same procedure.
                \end{proof} 
            
            We should note that for $n\geq5$, there is no unique solution satisfying the inequality. For example, $k(5)\in \{357593,387623,417653\}.$
                The closed form of $k(n)$ seems to be another open problem. Hence calculation of $k(n)$ for specific $n$ or some better lower and upper bounds can be seen as new problems arising from this article. 

                {}
                \vskip 0.3 cm
            \end{document}